\newcommand{\mat}{\begin{pmatrix}}
\newcommand{\emat}{\end{pmatrix}}
\renewcommand{\l}{\lambda}
\renewcommand{\a}{\alpha}
\renewcommand{\i}{\infty}
\renewcommand{\i}{\infty}
\newtheorem{thm}{Theorem}
\newtheorem{lem}[thm]{Lemma}
\newtheorem{prop}[thm]{Proposition}
\newtheorem{rmk}[thm]{Remark}
\numberwithin{equation}{section}
\numberwithin{thm}{section}
\newcommand{\cancel}[1]{\ifmmode\text{\sout{\ensuremath{#1}}}\else\sout{#1}\fi}
\begin{document}
\sloppy

\title[Alder-type partition inequality]{Alder-type partition inequality at the general level}

\author[H. Cho]{Haein Cho}
\address{Department of Mathematics, Kangwon National University, Chuncheon 24341, Gangwon, Republic of  Korea}
\email{joycie@kangwon.ac.kr}
\author[S.-Y. Kang]{Soon-Yi Kang}
\address{Department of Mathematics, Kangwon National University, Chuncheon 24341, Gangwon, Republic of  Korea}
\email{sy2kang@kangwon.ac.kr}
\author[B. Kim]{Byungchan Kim}
\address{School of Natural Sciences, Seoul National University of Science and Technology, Seoul 01811, Republic of  Korea}
\email{bkim4@seoultech.ac.kr}

\date{\today}
\subjclass[2020]{Primary  11P81, 05A17}
\keywords{Alder-type partition inequality, Rogers-Ramanujan identity, gap condition, congruence condition}

\begin{abstract}
A known Alder-type partition inequality of level $a$, which involves the second Rogers–Ramanujan identity when the level $a$ is 2, states that  the number of partitions of $n$ into parts differing by at least $d$ with the smallest part being at least $a$ is greater than or equal to that of partitions of $n$ into parts congruent to $\pm a \pmod{d+3}$, excluding the part $d+3-a$.  In this paper, we prove that for all values of $d$ with a finite number of exceptions, an arbitrary level $a$ Alder-type partition inequality holds without requiring the exclusion of the part $d+3-a$ in the latter partition.
\end{abstract}

\maketitle

\section{Introduction}

A partition $\pi$ of a positive integer $n$ is a non-ordered tuples $(\pi_1,\pi_2,\cdots, \pi_k)$ satisfying $|\pi|:=\pi_1+\pi_2+\cdots+\pi_k=n$. Each $\pi_i$ is called a part of the partition and $k$ the number of parts. Let $p(n|\, \mathit{condition})$ be the number of partitions of $n$ satisfying the specific {\it{condition}}.  For positive integers $a$, $b$ and $d$, we consider the partition functions
\begin{align*}
	q^{(a)}_d(n) &:=p(n|\, \mathit{parts}\geq a \  \mathrm{and}\ \mathit{parts\ differ\ by\ at\ least}\ d)
	\intertext{and}
	Q_d^{(b)}(n) &:=p(n|\, parts\equiv \pm b \pmod {d+3}).
\end{align*}
H.~L.~Alder in the Research Problem Section of the Bulletin of the American Mathematical Society in 1956 \cite{Alder} posed the question of whether the inequality 
\begin{equation}\label{alder1}
	q_{d}^{(1)}(n)\geq Q_{d}^{(1)}(n)
\end{equation}
holds true for all $d$ and $n>0$, which he inferred from the famous Euler's partition identity and the first Rogers-Ramanujan identity. G.~E.~ Andrews \cite{Andrews} later established the inequality for $d = 2^r-1$, where $r\geq 4$,  and  A.~J.~Yee \cite{Yee} extended the proof to cover the case of $d=7$ and all $d\geq 32$. The proof was subsequently completed by C. Alfes et al. in \cite{AJL}.

However, for the case of $a=2$, which relates to the second Rogers-Ramanujan identity, or for arbitrary values of $a$, the inequality $q_{d}^{(a)}(n)\geq Q_{d}^{(a)}(n)$ does not hold for all $d$ and $n>0$. In response, the second author and E.~Park \cite{KP}  considered the partition function $Q_{d}^{(a,-)}(n)$, which counts the number of partitions of $n$ into parts congruent to $\pm a$ modulo $(d+3)$ while excluding the part $d+3-a$. They proposed the inequality 
\begin{equation}\label{kp1}
	q_{d}^{(2)}(n) \geq Q_{d}^{(2,-)}(n)
\end{equation}
 and provided a proof for even $n$ and specific values of $d$ such as $d=2^r-2$ with $r\geq 5$ or $r=2$.
Shortly thereafter,  A.~L. Duncan et al. \cite{Holly} further extended the proof of inequality \eqref{kp1} to cover all values of $d \geq 62$. Additionally, they conjectured that the same inequality holds for $a=3$, which was recently proved by R.~Inagaki and R.~Tamura \cite{IT} for $d\geq187$ and $d=1,2,91,92,93$.
Moreover, for $a\geq 4$, Duncan et al. suggested that excluding both parts $a$ and $d+3-a$ in counting the partitions of the right hand side of \eqref{kp1} is necessary in order to maintain the validity of the inequality.
This conjecture was also recently proved for all $a, d, n\geq 1$ such that $\lceil \frac{d}{a} \rceil \geq 105$ by A. Armstrong et al. in \cite{Arms}. Following \cite{Arms}, we call this type of inequality for general $a$ an Alder-type partition inequality of level $a$.

In this paper, we show that the most general form of Alder type inequality of an arbitrary level $a$  holds  for all but a finite number of $d$, without the need to exclude any parts:

\begin{thm}\label{main} 
For $d=126$ and all $d \geq 253$, 
\[
	q_{d}^{(2)} (n) \geq Q_{d}^{(2)} (n)
\]
holds for all non-negative integers $n$, except for $n = d+1$, $d+3$, or $d+5$ when $d$ is odd.
\end{thm}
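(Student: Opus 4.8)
The plan is to build on the inequality $q_d^{(2)}(n)\ge Q_d^{(2,-)}(n)$, valid for $d\ge 62$ by \cite{Holly}, and to pay for the single extra part $d+1$ permitted by $Q_d^{(2)}$ through a generating-function reduction followed by a constrained injection. Since $Q_d^{(2)}$ and $Q_d^{(2,-)}$ differ only in whether the part $d+1$ is allowed, their generating functions satisfy
\[
\sum_{n\ge 0} Q_d^{(2)}(n)\,x^n=\frac{1}{1-x^{d+1}}\sum_{n\ge 0} Q_d^{(2,-)}(n)\,x^n ,
\]
equivalently $Q_d^{(2)}(n)=\sum_{j\ge 0}Q_d^{(2,-)}(n-j(d+1))$. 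Writing $G(x)=\sum_n q_d^{(2)}(n)x^n$ and $F(x)=\sum_n\big(q_d^{(2)}(n)-Q_d^{(2,-)}(n)\big)x^n$, a direct computation gives
\[
\sum_{n\ge 0}\big(q_d^{(2)}(n)-Q_d^{(2)}(n)\big)x^n=\frac{F(x)-x^{d+1}G(x)}{1-x^{d+1}} .
\]
Because $(1-x^{d+1})^{-1}$ has non-negative coefficients, for even $d$ (where the theorem asserts no exceptions) the whole statement reduces to the single strengthened inequality
\[
q_d^{(2)}(n)-q_d^{(2)}(n-(d+1))\ \ge\ Q_d^{(2,-)}(n)\qquad\text{for all }n,\qquad(\star)
\]
which says that the slack $F$ in the known inequality dominates the shift $x^{d+1}G$.

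The second step is to read $(\star)$ combinatorially. Adding $d+1$ to the largest part is an injection from the $q_d^{(2)}$-partitions of $n-(d+1)$ into those of $n$, and its image is exactly the partitions whose top gap $\pi_1-\pi_2$ is at least $2d+1$, together with the single parts exceeding $d+2$. Consequently $q_d^{(2)}(n)-q_d^{(2)}(n-(d+1))$ counts precisely the $q_d^{(2)}$-partitions of $n$ in the complementary reserved region: those with $\pi_1-\pi_2\le 2d$, or with a single part at most $d+2$. Thus $(\star)$ becomes the assertion that there is an injection from the $Q_d^{(2,-)}$-partitions of $n$ into this reserved region. I would obtain it by refining the injection that already proves $q_d^{(2)}(n)\ge Q_d^{(2,-)}(n)$ so that its output never has a large top gap, analysing the partitions with very few parts---where the difference condition is tightest---as a separate case.

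The hardest step is precisely this constrained injection, because forcing the image into the reserved region consumes the available room and is what drives the admissible range of $d$: the threshold rises from the value $62$ that suffices for the unrestricted inequality up to $d\ge 253$, while the isolated modulus $d=126=2^7-2$ lies below this range and is handled separately, by checking that the same construction still succeeds at this value using the favorable structure of the moduli treated in \cite{KP}. Once $(\star)$ is established, convolving with $(1-x^{d+1})^{-1}$ gives $q_d^{(2)}(n)\ge Q_d^{(2)}(n)$ for all $n$ when $d$ is even.

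For odd $d$ the construction fails by exactly one partition at $n=d+1,\,d+3,\,d+5$, namely the all-$2$'s partition, which is admissible only because each of these numbers is even when $d$ is odd. I would verify directly that these are the only deficiencies and that the surplus in $(\star)$ at the shadowed arguments $n+(d+1),\,n+2(d+1),\dots$ keeps them from propagating through the convolution, so that the exception set stays finite and is exactly the one stated. The finitely many small $n$ are then dispatched by direct computation.
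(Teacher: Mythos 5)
Your reduction is clean as far as it goes: the identity $\sum_n\bigl(q_d^{(2)}(n)-Q_d^{(2)}(n)\bigr)x^n=\bigl(F(x)-x^{d+1}G(x)\bigr)/(1-x^{d+1})$ is correct, your description of the image of the ``add $d+1$ to the largest part'' map is right, and $(\star)$ would indeed suffice for even $d$. The genuine gap is that $(\star)$ is strictly stronger than anything in the literature and your plan for proving it cannot work as stated. The inequality $q_d^{(2)}(n)\ge Q_d^{(2,-)}(n)$ of Duncan et al.\ is \emph{not} established by a direct injection on level-$2$ partitions whose image you could confine to the reserved region; it is proved by first passing to level $1$ via $q_d^{(2)}(n)\ge q_{\lceil d/2\rceil}^{(1)}(\lceil n/2\rceil)$ and then comparing level-$1$ objects. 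After that halving step the quantity you need to control --- the top gap $\pi_1-\pi_2$ of the original level-$2$ partition --- is no longer visible, so there is no existing injection to ``refine.'' You would have to build a new injection from scratch, and the constraint is severe precisely because your convolution with $(1-x^{d+1})^{-1}$ forces you to pay for \emph{arbitrarily many} copies of the part $d+1$ simultaneously; this is why $(\star)$ demands that the entire $Q_d^{(2,-)}$ count fit inside the bounded-top-gap region for every $n$. The claims that $d=126$ ``still succeeds'' and that the odd-$d$ deficiencies at $n=d+1,d+3,d+5$ do not propagate through the convolution are likewise asserted, not proved.

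For comparison, the paper sidesteps exactly this difficulty with the factorization $\frac{1}{1-q^{d+1}}=\frac{1+q^{d+1}}{1-q^{2d+2}}$ (Lemma~\ref{lem:key}): the even multiples of $d+1$ are repackaged as a legitimate new part $2d+2$, which is large enough to join the set $S$ in the framework of Lemma~\ref{lem:base_inj}, so that only \emph{at most one} optional copy of $d+1$ ever has to be absorbed. That single extra part is then paid for at the level-$1$ stage, inside a new injection $\psi$ (Proposition~\ref{ourshift}), by deleting $d+4$ and $d+8$ from the target set $T_r$ and using them as currency; the cases $d=2^r-2$ (including $d=126$) are run through Andrews' intermediate function $L_{d'}$ instead of Yee's $g_{d'}$, and the odd-$n$/odd-$d$ parity issues are handled by the separate modulus-shift argument for \eqref{holly1}. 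If you want to salvage your route, you would need either a proof of $(\star)$ itself or an analogue of the $(1+q^{d+1})/(1-q^{2d+2})$ trick to reduce the burden to a single part; without one of these the central step is missing.
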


\begin{thm}\label{main2} 
Let $a \ge 3$ and  $d \geq a (2^{12}-1)$.  Then
\[
	q_{d}^{(a)} (n) \geq Q_{d}^{(a)} (n)
\]
holds for all non-negative integers $n$, except for 
\begin{enumerate}
\item $n = d-a+3$, $d+3$, or $d+a+3$ when $d+3$ is a multiple of $a$.
\item $n=d+a+3$ when $d+3$ is not a multiple of $a$ and $a>3$.
\end{enumerate}
\end{thm}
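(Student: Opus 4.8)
The plan is to deduce the unrestricted inequality from the already established \emph{doubly excluded} inequality of Armstrong et al.~\cite{Arms} by combining a combinatorial reduction with an asymptotic comparison in the range of large $n$. Since $a\ge 3$ and $d\ge a(2^{12}-1)$ force $\lceil d/a\rceil\ge 4095\ge 105$, the result of \cite{Arms} applies uniformly and gives $q_d^{(a)}(m)\ge Q_d^{(a,--)}(m)$ for every $m$, where $Q_d^{(a,--)}$ denotes the count in which both of the two smallest admissible parts, $a$ and $d+3-a$, are forbidden. First I would record how many copies of these two parts a partition counted by $Q_d^{(a)}$ uses; peeling them off repeatedly yields the exact convolution
\[
Q_d^{(a)}(n)=\sum_{i\ge 0}\sum_{j\ge 0}Q_d^{(a,--)}\bigl(n-ia-j(d+3-a)\bigr),
\]
so the whole problem reduces to showing that $q_d^{(a)}(n)$ dominates this finite sum for all but the listed $n$.

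Second, for $n$ beyond an explicit threshold $N(d)$ I would run an asymptotic argument, and this is where I expect the main obstacle to lie. Both $q_d^{(a)}(n)$ and $Q_d^{(a)}(n)$ grow like $\exp\!\bigl(\Theta(\sqrt{n/d})\bigr)$, so a naive inductive use of the convolution is hopeless: bounding each shifted term by $q_d^{(a)}$ separately already overshoots $q_d^{(a)}(n)$. Instead I would establish an effective \emph{upper} bound for $Q_d^{(a)}(n)$, using that its parts occupy only the two residue classes $\pm a\pmod{d+3}$ so that at most about $2n/(d+3)$ part sizes are available, together with an effective \emph{lower} bound for $q_d^{(a)}(n)$ coming from the representation $\sum_{k\ge 0}\frac{x^{ka+d\binom{k}{2}}}{(1-x)(1-x^2)\cdots(1-x^k)}$ of its generating function: choosing $k\approx\sqrt{2n/d}$ and counting partitions of the residual $n-ka-d\binom{k}{2}$ into at most $k$ parts. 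The hypothesis $d\ge a(2^{12}-1)$ is precisely what I would use to separate the two exponents and force $q_d^{(a)}(n)\ge Q_d^{(a)}(n)$ once $n\ge N(d)$; carrying the constants explicitly, in the spirit of Alfes et al.~\cite{AJL}, is the technical heart of the proof.

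Finally, for $0\le n<N(d)$ I would argue by hand. A partition counted by $q_d^{(a)}$ has at most $k$ parts as soon as $n<ka+d\binom{k}{2}$, so in this range both sides are controlled by very few parts, and the admissible $Q_d^{(a)}$-parts below $2(d+3)$ are only $a$, $d+3-a$, $d+3+a$, and $2d+6-a$. A direct count then compares the two functions and isolates the finitely many $n$ at which $Q_d^{(a)}$ exceeds $q_d^{(a)}$, all of which lie in $\{d-a+3,\ d+3,\ d+a+3\}$. The all-$a$'s partition produces the excess at $n=d-a+3$ and $n=d+3$ exactly when $a\mid d+3$, giving case~(1); at $n=d+a+3$ the partition $(d+3-a,a,a)$ produces an unmatched excess unless $a=3$ and $a\nmid d+3$, in which case it is absorbed by the two-part $q$-partition $(d+3,3)$, giving case~(2). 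This reproduces both exceptional families in the statement, completing the argument.
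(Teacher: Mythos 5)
Your proposal has a genuine gap at its core. The convolution identity $Q_d^{(a)}(n)=\sum_{i,j\ge 0}Q_d^{(a,--)}(n-ia-j(d+3-a))$ is correct, and your endgame analysis of the exceptional values $n=d-a+3$, $d+3$, $d+a+3$ matches the paper's Tables 9 and 10. But the entire middle of the argument --- the effective asymptotic comparison for $n\ge N(d)$ --- is announced rather than executed, and you yourself flag it as ``the technical heart.'' This is not a routine estimate: both sides grow like $\exp(c\sqrt{n})$ with constants depending on $d$ that must be separated \emph{uniformly} in $d$ and $a$, which is exactly the circle-method-type analysis that Alfes--Jameson--Lemke Oliver needed for the level-$1$ case, and there it produced thresholds far beyond anything checkable ``by hand.'' Your small-$n$ verification only reaches parts below $2(d+3)$, i.e.\ $n$ up to roughly $2d$; even the paper's purely combinatorial injection does not activate until $n\ge 4d+2^r\approx 5d$ and must tabulate values up to about $7d$ (Table 8), so there is no reason to believe your asymptotic threshold $N(d)$ would land inside the window your direct count covers. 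As written, the range between $\approx 2d$ and $N(d)$ is uncovered. Note also that your first step (invoking the doubly excluded inequality of Armstrong et al.) ends up unused, since you abandon the convolution in favor of direct bounds on $Q_d^{(a)}(n)$.

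For contrast, the paper avoids asymptotics entirely. Its key move is to reinterpret $Q_{d}^{(a)}(n)$ (via Lemma~\ref{lem:key}) as pairs $(\pi,\mu)$ where $\pi$ absorbs the problematic part $d+3-a$ and $\mu$ ranges over a modified part-set in which $d+3-a$ is replaced by $2d+6-2a$; this makes the comparison set eligible for the element-wise injection of Lemma~\ref{lem:base_inj}. Combined with the reduction to level $1$ (Lemma~\ref{lem:change_a}) and an explicit injection $\psi$ built on top of $\varphi$ (Proposition~\ref{ourshift3}), the inequality is proved combinatorially for $n\ge 4d+2^r$, and the remaining range is handled by direct enumeration, which is feasible there because such partitions have very few parts. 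If you want to salvage your route, you would need to either carry out the effective Meinardus/AJL-type estimates with explicit constants valid for all $d\ge a(2^{12}-1)$, or replace the asymptotic step with an injection of the paper's kind.
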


\begin{rmk}
We choose the lower bound $a (2^{12} -1)$ for the simplicity. During the proof, we show that Theorem~\ref{main2} holds for $d \geq 3  (2^7 -1)$ when $a=3$ and $d \equiv 0 \pmod{3}$.
\end{rmk}

The rest of the paper is organized as follows. In Section 2, we collect key ingredients from the literature and we give a new interpretation for $Q_{d}^{(a)}(n)$, which enables us to include the previously eliminated part $d+3-a$. In Sections 3 and 4, we construct an injection mapping to prove that Theorem~\ref{main} holds for all but finitely many cases when $d$ is even.  In Section 5, we examine the first few values of the corresponding partition functions to complete the proof of Theorem~\ref{main} when $d$ is even.  In Section 6, we complete the proof of Theorem~\ref{main} by proving the case when $d$ is odd. We give the proof of Theorem~\ref{main2} in Section 7. Finally, in Section 8,  we conclude the paper with conjectures on the optimal bounds for $d$. 

\section{Preliminaries}

For the set $S \subset \mathbb{N}$, we define $\rho (S ; n)$ as the number of partitions of $n$ into parts from $S$. 
One of the main tools employed in \cite{Andrews, Holly, KP, Yee}  is the following theorem:
\begin{lem}\cite[Lemma 2.2]{Holly} \label{lem:base_inj}
Let $S=\{ x_i\}_{i=1}^{\infty}$ and $T=\{ y_i \}_{i=1}^{\infty}$ be strictly increasing sequences of positive integers such that $y_1 = m$, $m$ divides each $y_i$, and $x_i \geq y_i$ for all $i$. Then for all $n \geq 1$,
\[
	\rho (T ; mn) \geq \rho (S ; mn).
\]
\end{lem}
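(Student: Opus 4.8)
The plan is to construct an explicit injection $\phi$ from the set of partitions of $mn$ with parts in $S$ into the set of partitions of $mn$ with parts in $T$; producing such a map immediately yields $\rho(T;mn)\ge\rho(S;mn)$. Given a partition $\mu$ counted by $\rho(S;mn)$, I would record it by its multiplicity vector $(c_i)_{i\ge 1}$, where $c_i$ is the number of times $x_i$ occurs, so that $\sum_i c_i x_i = mn$. I define $\phi(\mu)$ by first replacing each part $x_i$ by $y_i$ with the same multiplicity $c_i$, and then appending just enough copies of the smallest allowed part $y_1 = m$ to restore the total to $mn$.

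First I would check that $\phi$ is well defined, i.e. that the number of appended copies of $m$ is a nonnegative integer. The deficit created by the replacement is $mn-\sum_i c_i y_i=\sum_i c_i(x_i-y_i)$, which is nonnegative because $x_i\ge y_i$ for every $i$. Moreover, since $m\mid y_i$ for all $i$ and $m\mid mn$, this deficit is divisible by $m$, so exactly $k:=(mn-\sum_i c_i y_i)/m$ copies of $m$ are appended, with $k\in\Z_{\ge 0}$. The resulting object is a genuine partition of $mn$ with all parts in $T$.

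The key step, and the one I expect to be the only real subtlety, is injectivity: appending copies of $y_1$ merges with the copies of $y_1$ arising from the block $x_1\mapsto y_1$, and this appears to destroy the value of $c_1$. The point that rescues the argument is that $c_1$ is not free — once $c_2,c_3,\dots$ are fixed, the constraint $\sum_i c_i x_i=mn$ forces $c_1=(mn-\sum_{i\ge 2}c_i x_i)/x_1$, so $\mu$ is completely determined by the tail $(c_2,c_3,\dots)$ of its multiplicity vector. Since $\phi$ preserves this tail exactly (the multiplicity of $y_i$ in $\phi(\mu)$ equals $c_i$ for every $i\ge 2$), one reads off $(c_2,c_3,\dots)$ from $\phi(\mu)$ and thereby recovers $\mu$. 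Hence $\phi$ is injective and the inequality follows. I would close by remarking that the strict monotonicity of $\{x_i\}$ and $\{y_i\}$ is what makes the multiplicity vectors well defined, and that only finitely many parts are $\le mn$, so the sequences being infinite causes no convergence issue.
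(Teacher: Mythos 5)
Your proof is correct and coincides with the argument the paper relies on: the lemma is quoted from Duncan et al., and the injection $\varphi$ described there is exactly your map (replace each $x_i$ by $y_i$ and pad the deficit, which is divisible by $m$, with copies of $y_1=m$), with injectivity recovered from the multiplicities of $y_i$ for $i\ge 2$ together with the fixed total $mn$. No issues.
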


This lemma has been used extensively in the proofs of Alder-type inequalities. Andrews established the case when $m=1$ and applied it to  $T=\{m|m \equiv 1, d+2, d+4, \cdots, d+2^{r-1} \pmod{2d}\}$ and $S=\{m|m \equiv 1, d+2 \pmod{d+3}\}$ to prove \eqref{alder1}, when $d=2^r-1$ and $r\geq 2$.
The case when $m=2$ was developed in \cite{KP} to prove \eqref{kp1}  for partial cases.

During the proof of Lemma \ref{lem:base_inj}, Duncan et al. introduced a mapping, denoted as $\varphi$, which establishes an injection from a set of partitions of $mn$ counted by $\rho(S;mn)$ to the set of partitions of $mn$ counted by $\rho(T;mn)$. 
This mapping, along with the insights obtained from Lemma \ref{lem:base_inj}, will play a crucial role in our subsequent analyses.

Another crucial technique employed in \cite{Arms, Holly, IT} is the shifting of $d$. We will make use of some of their results in our proof.

\begin{lem}\cite[Lemma 2.4]{Holly}\label{lem:change_a}
Let $a,d \geq 1$, and $n \geq d+2a$. Then
\[
	q_{d}^{(a)} (n) \geq q_{\lceil \frac{d}{a} \rceil}^{(1)} \left( \left\lceil \frac{n}{a} \right\rceil \right).
\]
\end{lem}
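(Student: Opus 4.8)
The plan is to exhibit an explicit injection $\Phi$ from the partitions enumerated by $q^{(1)}_{\lceil d/a\rceil}(\lceil n/a\rceil)$ into those enumerated by $q^{(a)}_{d}(n)$; producing such a map immediately yields the asserted inequality. Write $D=\lceil d/a\rceil$, $M=\lceil n/a\rceil$, and $r=aM-n$, so that $0\le r\le a-1$, and note that the hypothesis $n\ge d+2a$ forces $M\ge D+2$. The backbone of $\Phi$ is multiplication by $a$: given a partition $\lambda=(\lambda_1>\cdots>\lambda_k)$ of $M$ with all parts $\ge 1$ and consecutive differences $\ge D$, the tuple $(a\lambda_1,\dots,a\lambda_k)$ is a partition of $aM=n+r$ all of whose parts are $\ge a$ and whose consecutive differences are $\ge aD\ge d$. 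Thus after scaling, every constraint defining $q^{(a)}_{d}$ already holds, and only the total has to be corrected downward from $n+r$ to $n$.

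To absorb the surplus $r$ I would adjust the scaled partition by $r$ while preserving the minimum-part and gap conditions. In the generic case, when either $r=0$ or the smallest part satisfies $\lambda_k\ge 2$, I simply subtract $r$ from the smallest scaled part: the new smallest part is $a\lambda_k-r\ge 2a-(a-1)=a+1\ge a$, the bottom gap only increases, and the map is invertible because $r$ is determined by $n$ and the remaining parts are exactly the multiples of $a$. The delicate case is $\lambda_k=1$ with $r\ge 1$, where the smallest scaled part equals $a$ and cannot be lowered. Here I would relocate the deficit upward, using that each scaled gap exceeds $d$ by at least $aD-d\ge 0$, and, when that slack is insufficient (for instance when $a\mid d$), deleting the bottom part $a$ and compensating by enlarging the largest part by $a-r$, so that the total changes by $-a+(a-r)=-r$ as required; the inequality $M\ge D+2$ guarantees that there are enough parts, and that the top part is large enough, for this redistribution to land inside $q^{(a)}_{d}(n)$.

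The step I expect to be the main obstacle is verifying that the combined map is genuinely injective across this case split, rather than merely well defined. The residue of each part modulo $a$ is the key bookkeeping device: in every image exactly one part is $\not\equiv 0\pmod a$ (namely $\equiv -r$), and its position---smallest in the generic branch, largest in the exceptional branch---together with $r$ should let me reconstruct $\lambda$ uniquely. The one place where this fails is at single-part images, where smallest and largest coincide: the sources $(M)$ and $(M-1,1)$ both threaten to map to $(n)$ when $a\nmid n$. Resolving this lone collision---by rerouting one source to a spare partition of $n$ that the rest of the map never uses, whose existence I expect to follow precisely from $n\ge d+2a$---is the crux, and once it is handled the injection, and hence the lemma, is complete.
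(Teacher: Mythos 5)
The paper does not prove this lemma at all --- it is imported verbatim from Duncan et al.\ \cite{Holly} --- and your dilation-and-adjustment injection (multiply parts by $a$, subtract the surplus $r=a\lceil n/a\rceil-n$ from the smallest part when that is safe, and otherwise delete the bottom part $a$ while adding $a-r$ to the largest) is essentially the argument given there. Your proposal is sound: the delete-and-enlarge move in fact works unconditionally (so the ``relocate the slack upward'' sub-branch is dispensable), the residue-mod-$a$ bookkeeping does force the only collision to occur at the single-part image $(n)$ coming from $(M)$ and $(M-1,1)$, and the spare partition you need is exactly $(n-a,a)$, which lies in the target set precisely because $n\ge d+2a$ and is missed by the rest of the map since every other image with $a\nmid n$ has all parts exceeding $a$.
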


Meanwhile, we may interpret $Q_{d}^{(a)} (n)$ as the number of pairs of partitions $(\pi, \mu)$, where $\mu$ is a partition into parts $\equiv \pm a \pmod{d+3}$, where the part of size $d+3-a$ is replaced by  $2d+6-2a$ and $\pi$ is either a partition of the single part $(d+3-a)$ or an empty partition $\emptyset$ satisfying $|\pi| + |\mu| = n$. The main obstacle when we employ Lemma \ref{lem:base_inj} to prove Alder-type inequality seems from the challenge of identifying a suitable set that satisfies the conditions of Lemma \ref{lem:base_inj}, due to the presence of the second smallest element $d+3-a$, in the set $S = \{ m \mid m \equiv \pm a \pmod{d+3}\}$.  This is why the part $d+3-a$ was excluded in previous work. However, our new interpretation of $Q_{d}^{(a)} (n)$ enables us to overcome this difficulty. The expression for its generating function is as follows:
\begin{lem}\label{lem:key}
For a positive integer $d$, 
\[
	\sum_{n \geq 0} Q_{d}^{(a)} (n)q^n = \frac{1}{(q^a, q^{d+3-a} ; q^{d+3})_\i}
	=\frac{ 1+q^{d+3-a}}{ (1-q^{2d+6-2a})(q^a, q^{2d+6-a} ; q^{d+3})_{\infty} }.
\]
\end{lem}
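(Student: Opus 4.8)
The statement is a pair of generating-function identities, and my plan is to establish each equality in turn by translating the combinatorial description of $Q_d^{(a)}(n)$ into a product and then performing an elementary algebraic manipulation on that product. First I would verify the left equality. By definition $Q_d^{(a)}(n)$ counts partitions of $n$ into parts $\equiv \pm a \pmod{d+3}$, so the parts come from the set $\{a, d+3-a, d+3+a, 2(d+3)-a, 2(d+3)+a, \dots\}$, i.e. all positive integers congruent to $a$ or to $d+3-a$ modulo $d+3$. Since the generating function for partitions whose parts lie in a prescribed set is the product of $(1-q^{\text{part}})^{-1}$ over the allowed part sizes, I would write
\[
	\sum_{n \geq 0} Q_d^{(a)}(n) q^n = \prod_{j \geq 0} \frac{1}{(1-q^{a+j(d+3)})(1-q^{(d+3-a)+j(d+3)})} = \frac{1}{(q^a, q^{d+3-a} ; q^{d+3})_\i},
\]
using the standard $q$-Pochhammer notation $(x,y;q)_\i = \prod_{j\ge 0}(1-xq^{j})(1-yq^{j})$ applied with base $q^{d+3}$. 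This is just a restatement of the definition and carries no real content beyond fixing notation.

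The substance is the second equality, which rewrites the factor coming from the part $d+3-a$. The idea is that the two residue classes $a$ and $d+3-a$ are no longer treated symmetrically once we wish to incorporate the new interpretation of $Q_d^{(a)}(n)$ described just above the lemma, in which the part $d+3-a$ is either absent or appears exactly once, and all \emph{other} multiples of $d+3-a$ in that progression are thickened to the progression based at $2d+6-2a = 2(d+3-a)$. Concretely, I would isolate the single factor $(1-q^{d+3-a})^{-1}$ from the product $(q^{d+3-a};q^{d+3})_\i^{-1}$ and rewrite it as
\[
	\frac{1}{1-q^{d+3-a}} = \frac{1+q^{d+3-a}}{1-q^{2(d+3-a)}} = \frac{1+q^{d+3-a}}{1-q^{2d+6-2a}},
\]
the elementary identity $(1-x)^{-1} = (1+x)(1-x^2)^{-1}$ with $x = q^{d+3-a}$. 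Multiplying the remaining factors of the $q^{d+3-a}$-progression, namely $\prod_{j\ge 1}(1-q^{(d+3-a)+j(d+3)})^{-1} = (q^{2d+6-a};q^{d+3})_\i^{-1}$, back together with the full $a$-progression $(q^a;q^{d+3})_\i^{-1}$ then yields exactly
\[
	\frac{1+q^{d+3-a}}{(1-q^{2d+6-2a})(q^a, q^{2d+6-a};q^{d+3})_\i},
\]
which is the claimed right-hand side.

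The only point that requires a moment's care is bookkeeping of the Pochhammer factors: I must confirm that deleting the $j=0$ term $(1-q^{d+3-a})$ from $(q^{d+3-a};q^{d+3})_\i$ leaves precisely $(q^{2d+6-a};q^{d+3})_\i$, since $(d+3-a)+(d+3) = 2d+6-a$, and that the $a$-progression is untouched. There is no convergence subtlety because all identities are between formal power series in $q$, and no exceptional small values of $d$ or $a$ arise because every exponent appearing is a genuine positive integer for $d \ge 1$ and $1 \le a < d+3$. Thus I do not anticipate a genuine obstacle; the lemma is a purely formal rewriting whose value lies entirely in setting up the factor $(1+q^{d+3-a})$ that will later allow the injection of Lemma 2.1 to be applied to the base $2d+6-2a$.
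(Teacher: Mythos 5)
Your proof is correct and is precisely the intended argument: the paper states this lemma without proof, treating it as immediate from the standard product formula for partitions into prescribed residue classes together with the factorization $(1-x)^{-1}=(1+x)(1-x^2)^{-1}$ applied to the $j=0$ factor of $(q^{d+3-a};q^{d+3})_\infty$. Your bookkeeping of the Pochhammer factors (that removing $(1-q^{d+3-a})$ leaves $(q^{2d+6-a};q^{d+3})_\infty$) is exactly right, and your closing remark correctly identifies the purpose of the rewriting, namely producing the factor $1+q^{d+3-a}$ that underlies the pair-of-partitions interpretation used with Lemma 2.1.
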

Here and in the sequel, $(a;q)_n:=\prod_{k=1}^n(1-aq^{k-1})$, $(a_1,a_2,\dots,a_k;q)_n:=(a_1;q)_n(a_2;q)_n\cdots(a_k;q)_n$, and $(a;q)_\infty:=\lim_{n\to\i}(a;q)_n$ for $|q|<1$.

\section{When $a=2$ and $d$ is even with $d\neq 2^r -2$}

We begin by improving the shift Alder-type inequality \cite[Proposition 3.1]{Holly} by removing the elimination condition. 

\begin{prop}\label{ourshift}
Let $d > 127=2^7-1$ with $d \neq 2^r -1$. For all $n \geq d+4$,
\[
	q_{d}^{(1)} (n) \geq Q_{d-2}^{(1)} (n).
\]
\end{prop}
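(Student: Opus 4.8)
The plan is to first eliminate the troublesome part $d$ from the right-hand side and then feed the result into a combination of Andrews' binary injection and the base injection of Lemma~\ref{lem:base_inj}. Since $Q_{d-2}^{(1)}(n)$ counts partitions into parts $\equiv\pm1\pmod{d+1}$, its two smallest available parts are $1$ and $d$, and it is precisely the part $d$ (sitting just below the modulus) that prevents a direct comparison with a suitable set via Lemma~\ref{lem:base_inj}; this is why \cite[Proposition 3.1]{Holly} had to exclude it. I would start by invoking Lemma~\ref{lem:key} with $a=1$ and $d$ replaced by $d-2$ to obtain
\[
	\sum_{n\ge0}Q_{d-2}^{(1)}(n)q^n=\frac{1+q^{d}}{(1-q^{2d})(q,q^{2d+1};q^{d+1})_\infty},
\]
so that $Q_{d-2}^{(1)}(n)=\rho(S^{*};n)+\rho(S^{*};n-d)$, where $S^{*}=\{m\equiv\pm1\pmod{d+1},\ m\ne d\}\cup\{2d\}$ is obtained from the original part set by deleting $d$ and inserting $2d$. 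The decisive gain is that the second-smallest element of $S^{*}$ is now $d+2$, which fits the binary pattern used below.

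Next I would introduce the Andrews-type set $T=\{m:m\equiv1,\,d+2,\,d+4,\,d+8,\dots,d+2^{r-1}\pmod{2d}\}$, with $r$ chosen so that $d+2^{r-1}<2d$. Two ingredients are needed. First, a version of Andrews' injection valid for $d\ne2^r-1$ (not only for $d=2^r-1$) should give $q_d^{(1)}(n)\ge\rho(T;n)$: each part drawn from $T$ carries a binary label read off from its residue $d+2^{j}$, and interpreting these labels as binary digits reassembles the parts into a partition whose successive parts differ by at least $d$. Second, writing $T=\{y_i\}$ and $S^{*}=\{x_i\}$ in increasing order, I would verify the termwise domination $y_i\le x_i$ for every $i$, so that Lemma~\ref{lem:base_inj} (with $m=1$), realized by the injection $\varphi$, yields $\rho(T;n)\ge\rho(S^{*};n)$. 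Comparing densities ($r$ residues per $2d$ for $T$ against essentially $4$ for $S^{*}$) shows that domination holds asymptotically once $r\ge5$, and only the finitely many small indices push the threshold up.

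It remains to account for the extra summand $\rho(S^{*};n-d)$, that is, the pairs with $\pi=(d)$ in the interpretation of Lemma~\ref{lem:key}. Here I would compose the maps above with a second injection that takes an $S^{*}$-partition of $n-d$, sends it to a difference-$\ge d$ partition $\lambda$ of $n-d$ as before, and then absorbs the part $d$ (for instance by enlarging the largest part of $\lambda$ by $d$), so as to land in a class of difference-$\ge d$ partitions of $n$ that is disjoint from the image produced by the case $\pi=\emptyset$. Combining the two maps yields a single injection from the $Q_{d-2}^{(1)}(n)$ pairs into the partitions counted by $q_d^{(1)}(n)$, which is the claimed inequality; this construction is what requires the range $n\ge d+4$, since for smaller $n$ there are simply too few difference-$\ge d$ partitions for the injection to exist.

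The main obstacle is twofold. The combinatorial heart is arranging that the two injections have disjoint images while both preserve the difference-$\ge d$ condition; recovering which of $\pi=\emptyset$ or $\pi=(d)$ produced a given output partition is the delicate point. The second obstacle is bookkeeping: confirming $y_i\le x_i$ for all $i$ is a finite but careful check, and it is exactly at the extremal moduli $d=2^r-1$—where the binary set $T$ is tight and leaves no slack to absorb the additional part $d$—that the verification breaks down. This is what explains both the hypothesis $d\ne2^r-1$ and the explicit bound $d>127=2^{7}-1$.
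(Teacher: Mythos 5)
Your overall skeleton matches the paper's: reinterpret $Q_{d-2}^{(1)}(n)$ via Lemma~\ref{lem:key} as pairs $(\pi,\mu)$ with $\pi\in\{\emptyset,(d)\}$ and $\mu$ drawn from $S^{*}$, compare $S^{*}$ termwise with an Andrews--Yee binary set via Lemma~\ref{lem:base_inj}, and then deal with the extra summand $\rho(S^{*};n-d)$. But the step you yourself flag as ``the delicate point'' is exactly the one you leave unresolved, and it is the heart of the proof. Absorbing the part $d$ by ``enlarging the largest part of $\lambda$ by $d$'' gives no way to decide, from the output, whether it came from a $\pi=\emptyset$ pair or a $\pi=(d)$ pair, so injectivity of the combined map is not established. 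The paper's solution is structural: it deletes $d+4$ and $d+8$ from the target set $T_r$ (so these parts never occur in images of $\pi=\emptyset$ pairs) and, when $\pi=(d)$, it converts the mass $d+\mu_m$ (where $\mu_m$ is the smallest part $>1$ of $\mu$) into $j$ copies of $d+8$ and $\ell-j$ copies of $d+4$ plus $1$'s, where $m=\ell(\ell+1)/2+j$ indexes $\mu_m$ in $S^{*}$; the multiset of $d+4$'s and $d+8$'s then records both that $\pi=(d)$ and which part $\mu_m$ was consumed, and the inequality $\mu_m+d\ge(d+4)\ell+4j$ (using $d>127$) makes the map well defined. Nothing in your sketch plays this role.

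A second gap: the inequality $q_d^{(1)}(n)\ge\rho(T;n)$ with $T$ containing all residues $1,d+2,\dots,d+2^{r-1}\pmod{2d}$ with unrestricted multiplicities is Andrews' bound and is only available for $d=2^r-1$. For $d\ne 2^r-1$ the paper must fall back on Yee's weaker bound $q_d^{(1)}(n)\ge g_d(n)$, in which the residue $d+2^{r-1}$ may occur at most once, and which holds only for $n\ge 4d+2^r$; consequently the range $d+4\le n<4d+2^r$ has to be handled by a separate direct comparison of values (the tables in the paper). Your sketch assumes the stronger bound for all $n$ and all $d$ and offers no treatment of the initial range. Relatedly, your explanation of the hypothesis $d\ne 2^r-1$ is off: the termwise domination does not break down there (that case is handled by the same argument with $L_d$ in Proposition~\ref{shift2}); the exclusion is inherited from the hypotheses of Yee's estimate for $g_d$.
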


\begin{proof}
Let $r = \lfloor \log_{2} (d+1) \rfloor$.  For $n \geq 4d+2^r$, by arguments on page 71 and Lemmas 2.2 and 2.7 in \cite{Yee}, we find that 
\begin{equation}\label{yee1}
	q_{d}^{(1)} (n) \geq g_{d} (n),
\end{equation}
where
\[
	\sum_{n \geq 0} g_d (n) q^n := \frac{ (-q^{d+2^{r-1}} ; q^{2d} )_{\infty}}{ (q, q^{d+2}, q^{d+4}, \ldots, q^{d+2^{r-2} } ; q^{2d} )_{\infty}}.
\]

From now we will show that
\[
	g_{d} (n) \geq Q_{d-2}^{(1)} (n)
\] 
by constructing an injection $\psi$ from the set of partitions counted by $Q_{d-2}^{(1)} (n)$ to that counted by $g_{d} (n)$.
Set
\[
\begin{aligned}
	S &= \{x \mid x \equiv 1, d \pmod{d+1}\} \cup \{2d\} \setminus \{d\}, \\
	T_r &= \{ y \mid y \equiv 1, d+2, d+4, \dots, d+2^{r-2} \pmod{2d}\} \setminus \{d+4, d+8\}.
\end{aligned}
\]
Arrange $S$ and $T_r$ in increasing order and let $x_i$ and $y_{r,i}$ be the $i^{th}$ elements of $S$ and $T_r$, respectively. By observing the values of $x_i$ and $y_{7,i}$ in Table \ref{STr_table} below, we find that $x_1 = y_{7,1} = 1$ and $x_i \ge y_{7,i}$.
\begin{center}
\begin{table}[h!]
\begin{tabular}{lll}
\toprule % include \newpackage{booktabs}
$i$	    & $x_i$	& $y_{7,i}$	\\ \midrule
1	    & 1		& 1 	\\
2       & $d+2$ & $d+2$ \\
3	    & $2d$  & $d+16$\\
$6k \; (k \ge 1)$    & $3k(d+1) - 1$     & $(2k+1)d+2$  \\
$6k+1 \; (k \ge 1)$  & $3k(d+1) + 1$     & $(2k+1)d+4$  \\
$6k+2 \; (k \ge 1)$  & $(3k+1)(d+1) - 1$ & $(2k+1)d+8$  \\
$6k+3 \; (k \ge 1)$  & $(3k+1)(d+1) + 1$ & $(2k+1)d+16$ \\
$6k+4 \; (k \ge 0)$  & $(3k+2)(d+1) - 1$ & $(2k+1)d+32$ \\
$6k+5 \; (k \ge 0)$  & $(3k+2)(d+1)+ 1$ & $2(k+1)d+1$  \\
\bottomrule
\end{tabular}
\caption{Elements of $S$ and $T_7$} \label{STr_table}
\end{table}
\end{center}

When $r \ge 8$, as in Table \ref{Tr_table}, since the coefficient of $d$ is largest when $r=7$ for each $i$, we have that  $x_i \ge y_{r,i}$ for all $i$ and $r \ge 7$.

\begin{center}
\begin{table}[h!] 
\begin{tabular}{lllllllllll}
\toprule % include \newpackage{booktabs}
$r \backslash i$ & 1 & 2 & 3	& 4	& 5 & 6 & 7 & 8 & 9 & 10\\ \midrule
8  & 1 $\quad$ & $d+2$ & $d+16$ & $d+32$ & $d+64$ & $2d+1$  & $3d+2$ & $3d+4$  & $3d+8$ & $3d+16$ \\
9  & 1 & $d+2$ & $d+16$ & $d+32$ & $d+64$ & $d+128$ & $2d+1$ & $3d+2$  & $3d+4$ & $3d+8$ \\
10 & 1 & $d+2$ & $d+16$ & $d+32$ & $d+64$ & $d+128$ & $d+256$ & $2d+1$ & $3d+2$ & $3d+4$ \\
11 & 1 & $d+2$ & $d+16$ & $d+32$ & $d+64$ & $d+128$ & $d+256$ & $d+512$ & $2d+1$ & $3d+2$ \\
12 & 1 & $d+2$ & $d+16$ & $d+32$ & $d+64$ & $d+128$ & $d+256$ & $d+512$ & $d+1024$ & $2d+1$ \\ \bottomrule
\end{tabular}
\caption{Elements of $T_r$}\label{Tr_table}
\end{table}
\end{center}

Then by Lemma \ref{lem:base_inj}, there is an injection $\varphi$ from the set of partitions counted by $\rho(S;n)$ to the set of partitions of $n$ counted by $\rho(T_r;n)$. 
We note that $\rho(T_r;n)$ counts the partitions generated by
\[
\frac{ 1} { (q, q^{d+2}, q^{3d+4}, q^{3d+8}, q^{d+16}, \ldots, q^{d+2^{r-2} } ; q^{2d} )_{\infty}}.
\]
Also,note that
%\[\sum_{n \geq 0} Q_{d-2}^{(1)} (n)q^n = \frac{1}{(q, q^{d} ; q^{d+1})_\i}=\frac{1+q^d}{(1-q^{2d}) (q, q^{2d+1} ; q^{d+1} )_\i},\]
$Q_{d-2}^{(1)} (n)$ is the number of pairs of partitions $(\pi, \mu)$, where $\mu$ is a partition into parts from $S$ and $\pi$ is either a partition of the single part $(d)$ or an empty partition satisfying $|\pi| + |\mu| = n$.
Thus an injection desired can be constructed as follows:
\begin{enumerate}
\item[] (Case 1). If $\pi=\emptyset$, then we just take the map so that $\psi=\varphi$. 
\item[] (Case 2). If $\pi = (d)$, we divide into two cases.
\begin{enumerate}
\item Suppose $\mu$ has only a part of size $1$. Then  we define $\psi$ by
\[\psi(d,1,1,\ldots, 1)=(d+4, 1, \ldots, 1).\] 
This is valid, because there are at least 4 copies of $1$ as $n\geq d+4$.  
\item Suppose $\mu$ has a part other than $1$.  We  let $\mu_m$ be the smallest part $>1$ in $\mu$ and  $\overline{\mu_m}$ be the partition obtained from $\mu$ removing the part $\mu_m$.
Then $\mu_m=d+2$, $2d$, $(k+2)(d+1) -1$, or $(k+2)(d+1)+1$  for a non-negative integer $k$. We note that $\mu_m = (k+2)(d+1) -1$ is the $(2k+4)^{th}$ element and $\mu_m = (k+2)(d+1)+1$ is the $(2k+5)^{th}$ element in $S$. 
For a positive integer $m$, let  $\ell$ be the largest integer satisfying $\displaystyle{\frac{\ell (\ell +1)}{2} \leq m}$ and $\displaystyle{m = \frac{\ell (\ell +1)}{2} +j}$ with $0\leq j\le\ell$.
We define $\psi$ when $\mu_m$ is the $m^{th}$ element in $S$ by
$$\psi(d,\mu_m,\overline{\mu_m})=(d+8,\dots,d+8,d+4,\dots,d+4,1,\dots1,\varphi(\overline{\mu_m})),$$
where the part $d+8$ appears $j$ times and  the part $d+4$ appears $\ell-j$ times. 
For example,  $(d,\mu_m,\overline{\mu_m})$ for $1\le \ell\le 3$ are mapped to 
 \[
	\begin{cases}
	\psi(d,\mu_2,\overline{\mu_2})&=(d+8,1,\cdots,1, \varphi(\overline{\mu_2})),\\
		 \psi(d,\mu_3,\overline{\mu_3})&=(d+4, d+4, 1, \ldots, 1,\varphi(\overline{\mu_3}) ),\\
		  \psi(d,\mu_4,\overline{\mu_4})&=(d+4, d+8, 1, \ldots, 1,\varphi(\overline{\mu_4}) ), \\
		\psi(d,\mu_5,\overline{\mu_5})&=(d+8, d+8, 1, \ldots, 1,\varphi(\overline{\mu_5}) ),\\
		\psi(d,\mu_6,\overline{\mu_6})&=(d+4, d+4, d+4, 1, \ldots, 1,\varphi(\overline{\mu_6}) ),\\
		\psi(d,\mu_7,\overline{\mu_7})&=(d+8, d+4, d+4, 1, \ldots, 1,\varphi(\overline{\mu_7}) ),\\
		\psi(d,\mu_8,\overline{\mu_8})&=(d+8, d+8, d+4, 1, \ldots, 1,\varphi(\overline{\mu_8}) ),\\
		\psi(d,\mu_9,\overline{\mu_9})&=(d+8, d+8, d+8, 1, \ldots, 1,\varphi(\overline{\mu_9}) ).
	\end{cases}
	\]
 
 To establish the validity of this definition, we claim that $\mu_m + d$ is always at least as large as $j(d+8) + (\ell-j)(d+4) = (d+4)\ell + 4j$. 
For $1\le \ell\le 3$, this claim is verified with the values above. For $\ell \geq 4$, we observe that
\[
\mu_m + d\ge\frac{\ell(\ell+1)}{4} d + d-1 \geq \frac{5}{4} \ell d \geq \ell (d+8)\geq (d+4)\ell + 4j,
\]
because $d>127$.
Therefore, regardless of the value of $\ell$, we have established that $\mu_m + d \geq (d+4)\ell + 4j$, as desired.
 \end{enumerate}
  \end{enumerate}
  
So far, we have established that for $n \geq 4d+2^r$ and $d \geq 127$,
 \[
	q_{d}^{(1)} (n)  \geq Q_{d-2}^{(1)} (n).
\]

Now, we compare the sizes of both sides of inequality directly for the case $d+4\le n< 4d+2^r$.  The values of $q_{d}^{(1)} (n)$ for  $d+4\le n< 4d+2^r$ are
given in Table \ref{qd1table}. In the table, $p_3(n)$ represents the number of partitions of $n$ into parts $\leq 3$. 
The values of $Q_{d-2}^{(1)} (n)$ can be found recursively by adding to $Q_{d-2}^{(1)} (n-1)$ the number of partitions with new parts that did not appear in the partitions of integer  $<n$. For example, the total partitions of $2d$ are $(1,...1), (d,1,..1,), (d+2,1,...,1), (d,d)$ and the partitions of $2d+1$ are obtained by adding part $1$ to each of these  partitions, in addition to a new partition $(2d+1)$.  In Table~\ref{Qd-21table}, we list only new partitions to be added when counting $Q_{d-2}^{(1)} (n)$.
\begin{center}
\begin{table}[h!] 
\begin{tabular}{lll}
\toprule % include \newpackage{booktabs}
$n$ & $q_{d}^{(1)} (n)$ & partitions \\  \midrule
$1\sim d$ & 1 & $(n)$ \\
$d+1$ & 1 & $(d+1)$ \\
$d+2$ & 2 & $(d+2), (d+1,1)$ \\
$d+3$ & 2 & $(d+3), (d+2,1)$ \\
$d+4\sim 3d+2$ & $1+\lfloor \frac{n-d}{2} \rfloor$ & $(n-y, y)$ for $0 \leq y \leq \frac{n-d}{2}$ \\
$3d+3\sim 4d+2^r$ & $1+\lfloor \frac{n-d}{2} \rfloor + p_3 (n-3d-3)$ & $(n-y, y)$ and partitions with three parts\\
\bottomrule
\end{tabular}
\caption{Values of $q_{d}^{(1)} (n)$}\label{qd1table}
\end{table}
\end{center}

\begin{center}
\begin{table}[h!] 
\begin{tabular}{lll}
\toprule % include \newpackage{booktabs}
$n$ & $Q_{d-2}^{(1)} (n)$ & new partitions \\	 \midrule
$1\sim d-1$ & 1 & $(1)$ \\
$d\sim d+1$ & 2 & $(d)$ \\
$d+2\sim 2d-1$ & 3 &  $(d+2)$\\
$2d$ & 4 & $(d,d)$ \\ 
$2d+1$ & 5 & $(2d+1)$ \\
$2d+2$ & 6 & $(d+2,d)$ \\
$2d+3$ & 7 & $(2d+3)$ \\
$2d+4\sim 3d-1$ & 8 & $(d+2,d+2)$ \\
$3d$ & 9 & $(d,d,d)$ \\
$3d+1$ & 10 & $(2d+1,d)$ \\
$3d+2$ & 12 & $(3d+2)$ ,$(d+2,d,d)$ \\
$3d+3$ & 14 & $(2d+1,d+2)$, $(2d+3,d)$ \\
$3d+4$ & 16 & $(3d+4)$, $(d+2,d+2,d)$ \\
$3d+5$ & 17 & $(2d+3, d+2)$ \\
$3d+6\sim 4d-1$  &18 & $(d+2,d+2,d+2)$ \\
$4d$ & 19 & $(d,d,d,d)$ \\
$4d+1$ & 20 & $(2d+1,d,d)$ \\
$4d+2$ & 23 & $(3d+2,d)$, $(2d+1,2d+1)$, $(d+2,d,d,d)$ \\
$4d+3$ & 26 & $(4d+3)$, $(2d+3,d,d)$, $(2d+1,d+2,d)$ \\
$4d+4$ & 30 & $(3d+4,d)$, $(3d+2,d+2)$, $(d+2,d+2, d,d)$, $(2d+3,2d+1)$ \\
$4d+5$  & 33 & $(4d+5)$, $(2d+3,d+2,d)$, $(2d+1,d+2,d+2)$ \\
$4d+6$ & 36 & $(3d+4,d+2)$, $(2d+3,2d+3)$, $(d+2,d+2,d+2,d)$ \\
$4d+7$ & 37 & $(2d+3,d+2,d+2)$ \\
$4d+8\sim 5d-1$ & 38 & $(d+2,d+2,d+2,d+2)$  \\
$5d$ & 39 & $(d,d,d,d,d)$ \\
\bottomrule
\end{tabular}
\caption{Values of $Q_{d-2}^{(1)} (n)$}\label{Qd-21table}
\end{table}
\end{center}

By comparing two tables, we find that
the desired inequality does not hold for $d,d+1,d+2,d+3$, but it does hold from $d+4$ to $2d-1$.  
We can also see that $q_d (n) \geq d/2 > 12\ge Q_{d-2}^{(1)} (n)$ holds from $2d$ to $3d+2$ and $q_d (n) \geq 3d/2 > 39\ge Q_{d-2}^{(1)} (n)$ from $3d+1$ to $4d+2^r$. Since $d>127$, both are true.

\end{proof}

Let  $d=2d' > 254$ with $d\neq 2^r-2$. We first assume that $n=2n' \ge 2(d'+4)=d+8$. Then, applying Lemma \ref{lem:change_a}, Proposition \ref{ourshift} and Lemma  \ref{lem:base_inj} in that order to the following inequalities, we have 
\[
q_{d}^{(2)} (n) \geq q_{d'}^{(1)} (n') \geq Q_{d'-2}^{(1)} (n') = Q_{d-1}^{(2)} (n)\geq Q_{d}^{(2)} (n), 
\] 
where the identity is deduced from the obvious bijection resulting from multiplying or dividing each part by $2$. The last inequality is deduced by Lemma  \ref{lem:base_inj} applied to the following sets:
\begin{align*}
	S&=\{2, d+1, d+5, 2d+4, \ldots \}, \\
	T&=\{2, d, d+4, 2d+2, \ldots \}.
\end{align*}

Now we assume that $n = 2n'-1\ge 2(d'+4) -1 = d+7$.  Then again it follows from Lemma \ref{lem:change_a} and Proposition \ref{ourshift} that  
\[
q_{d}^{(2)} (n) \geq q_{d'}^{(1)} (n') \geq Q_{d'-2}^{(1)} (n') = Q_{d-1}^{(2)} (2n').  
\] 
Thus it remains to show that
\begin{equation}\label{holly1}
Q_{d-1}^{(2)} (2n') \geq Q_{d}^{(2)} (2n'-1).
\end{equation}
This follows from the exactly same argument in the proof of $Q_{d-1}^{(2,-)} (2n') \geq Q_{d}^{(2,-)} (2n'-1)$ in \cite[Theorem 1.3]{Holly}.
Let 
\begin{align*}
	S &= \{m|m \equiv \pm 2\ (\mathrm{mod}\ d+3)\}=\{x_i\}_{i\ge 1} \\
	\intertext{and}
	 T &=\{m|m \equiv \pm 2\ (\mathrm{mod}\ d+2)\}=\{y_i\}_{i\ge 1}.
\end{align*}
For a partition $2n'-1 = \sum x_{i_j}$, we may let $\sum (x_{i_j} - y_{i_j}) = 2\beta -1$ as all $y_i$'s are even. Consider the map sending each part $x_{i_j}$ to $y_{i_j}$ and adding $\beta$ additional parts of size $2$. Then $\sum y_{i_j} + 2\beta=\sum x_{i_j}+1$ is a partition of $2n'$.

In summary, we have proven that Theorem \ref{main} holds for any even $d\ge 254$ with $d\neq 2^r-2$ and $n \geq d+7$.

\section{When $a=2$ and $d=2^r-2$ with $r\ge 7$}

In this section, we introduce an intermediate partition function utilized in \cite{Andrews} that serves as a bridge between $q_d^{(1)}(n)$ and $Q_d^{(1)}(n)$. For $d=2^r -1$, this function is defined as follows:
\[
\mathscr{L}_d (q) :=\sum_{n \geq 0} L_d (n) q^n = \frac{1}{(q, q^{d+2}, q^{d+4},\ldots, q^{d+2^{r-1}} ; q^{2d})_{\infty}}.
\]
It is shown in the proof of Theorem 4 in \cite{Andrews} that if $r\ge 4$,
\begin{equation}\label{andrews1}
	q_{d}^{(1)} (n) \geq L_{d}(n)
\end{equation}
for all positive integer $n$.

\begin{prop}\label{shift2}
Let $d =2^r -1$ with $r \geq 6$. Then for any positive integer $n\neq d, d+1, d+2, d+3$, 
\[
 L_d (n) \geq Q_{d-2}^{(1)} (n) .
 \]
\end{prop}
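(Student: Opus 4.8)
The plan is to mirror the proof of Proposition~\ref{ourshift}, replacing the intermediate function $g_d$ by the cleaner Andrews function $L_d$, and to build an injection $\psi$ from the objects counted by $Q_{d-2}^{(1)}(n)$ into the partitions counted by $L_d(n)$. By the interpretation of $Q_d^{(a)}(n)$ given in Section~2, $Q_{d-2}^{(1)}(n)$ is the number of pairs $(\pi,\mu)$ with $|\pi|+|\mu|=n$, where $\pi$ is either $(d)$ or $\emptyset$ and $\mu$ is a partition into parts from
\[
S=\{x\mid x\equiv \pm 1 \pmod{d+1}\}\cup\{2d\}\setminus\{d\},
\]
the same set that appears in Table~\ref{STr_table}. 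Since $L_d(n)=\rho(T_L;n)$ with $T_L=\{y\mid y\equiv 1, d+2, d+4,\ldots, d+2^{r-1} \pmod{2d}\}$, I would reserve the two smallest nontrivial parts $d+4$ and $d+8$ and work with
\[
T=T_L\setminus\{d+4,d+8\},
\]
so that $\rho(T;n)\le L_d(n)$ while $d+4$ and $d+8$ remain available to absorb the extra part $(d)$.

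Writing $S=\{x_i\}$ and $T=\{y_i\}$ in increasing order, I would first verify the hypothesis $x_i\ge y_i$ of Lemma~\ref{lem:base_inj}. Enlarging $r$ only adjoins the further residue class $d+2^{r}\pmod{2d}$ to $T$, so each $y_i$ is non-increasing in $r$; hence it is enough to check $x_i\ge y_i$ at the smallest level $r=6$ (where $d=63$), and the inequality then propagates to all $r\ge 6$. At $r=6$ a short table gives $x_1=y_1=1$, $x_2=y_2=d+2$, the equalities $x_6=y_6=3d+2$ and $x_7=y_7=3d+4$, and strict inequalities at every other index, consistently with Table~\ref{Tr_table}; for large $i$ the comparison is immediate from the densities $2/(d+1)$ of $S$ and $\approx r/(2d)$ of $T$, which force $x_i>y_i$ once $r\ge 4$. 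Lemma~\ref{lem:base_inj} then furnishes a base injection $\varphi\colon\rho(S;n)\hookrightarrow\rho(T;n)$ whose images never use the reserved parts $d+4$ or $d+8$.

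With $\varphi$ in hand I would define $\psi$ exactly as in Proposition~\ref{ourshift}. If $\pi=\emptyset$, set $\psi=\varphi$. If $\pi=(d)$ and $\mu=(1,\ldots,1)$ consists only of ones, send the pair to $(d+4,1,\ldots,1)$, which is legitimate precisely when $n\ge d+4$. If $\pi=(d)$ and $\mu$ has a smallest part $\mu_m>1$, occurring as the $m$-th element of $S$ with $m=\tfrac{\ell(\ell+1)}{2}+j$ and $0\le j\le \ell$, distribute $d+\mu_m$ as $j$ copies of $d+8$, $\ell-j$ copies of $d+4$, and the remaining mass as ones, appended to $\varphi(\overline{\mu_m})$. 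The validity of this last rule rests on $\mu_m+d\ge (d+4)\ell+4j$, which holds for $r\ge 6$ (so $d\ge 63>32$) by the same chain $\mu_m+d\ge \tfrac{\ell(\ell+1)}{4}d+d-1\ge \tfrac54\ell d\ge \ell(d+8)$ used in Proposition~\ref{ourshift}. Injectivity is clear: Case-1 images avoid $d+4,d+8$ whereas Case-2 images contain them; within Case~2 the numbers of $d+8$'s and $d+4$'s recover $j$ and $\ell$ and hence $m$ (the lone pattern of one $d+4$ and no $d+8$, i.e. $m=1$, being reserved for the all-ones subcase), from which $\mu_m$, the number of extra ones, and finally $\overline{\mu_m}$ via $\varphi^{-1}$ are determined. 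Since every image is a genuine partition counted by $L_d(n)$, this yields $L_d(n)\ge Q_{d-2}^{(1)}(n)$ for all $n\ge d+4$.

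It remains to treat the small values. For $1\le n\le d-1$ the only available part on either side is $1$, so $Q_{d-2}^{(1)}(n)=L_d(n)=1$; and the four values $n=d,d+1,d+2,d+3$ are the genuine exceptions named in the statement, since there the pair $(\,(d),(1,\ldots,1)\,)$ has fewer than four ones and cannot be rerouted, which is exactly where $Q_{d-2}^{(1)}(n)$ overtakes $L_d(n)$. I expect the main obstacle to be the termwise verification $x_i\ge y_i$ at the boundary level $r=6$: the generous slack of Proposition~\ref{ourshift} (which assumed $d>127$) has disappeared and several of the comparisons become equalities, so this step must be carried out by an explicit finite table for $r=6$ together with the monotonicity-in-$r$ and density arguments indicated above, rather than inherited directly from the earlier proposition.
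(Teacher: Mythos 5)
Your proposal is correct and follows essentially the same route as the paper, which simply declares the proof to be identical to the injection $\psi$ constructed for $g_d(n)\ge Q_{d-2}^{(1)}(n)$ in Proposition~\ref{ourshift}. You in fact supply the one detail that the paper's one-line proof elides: because the denominator of $\mathscr{L}_d(q)$ carries the full list of residues $1,d+2,\dots,d+2^{r-1}$ (one more class than $g_d$ at the same $r$), the set $T=T_L\setminus\{d+4,d+8\}$ has enough small elements for the termwise comparison $x_i\ge y_i$ to survive down to $r=6$ (where $d=63<127$), and your explicit check at $r=6$ together with the observation that the chain $\mu_m+d\ge\frac{5}{4}\ell d\ge\ell(d+8)$ only needs $d\ge 32$ is exactly what justifies the weaker hypothesis of this proposition.
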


\begin{proof} The proof is exactly the same with the proof of $g_d (n) \geq Q_{d-2}^{(1)} (n)$  in Proposition \ref{ourshift}, so we omit the proof. 
 \end{proof}
 
 Now let $d =2d' = 2^r -2$ with $r \geq 7$. Then for both $n=2n'\geq d+8$ or $n=2n'-1 \geq d+7$, we have the following chain of inequalities:
\[
q_{d}^{(2)} (n) \geq q_{d'}^{(1)} (n') \ge L_{d'}(n')\geq Q_{d'-2}^{(1)} (n') = Q_{d-1}^{(2)} (2n'),
\] 
where we have applied  Lemma \ref{lem:change_a}, \eqref{andrews1}, and  Proposition \ref{shift2}  in succession.
Furthermore, for $n=2n' $, we can use \eqref{holly1}, and for $n=2n'-1$, we can apply Lemma \ref{lem:base_inj}, yielding the result
 \[Q_{d-1}^{(2)} (2n')\ge Q_{d}^{(2)} (n).\] 
 
As a consequence, Theorem \ref{main} holds for any $d= 2^r-2$ with $r\ge 7$ and $\geq d+7$.
 
\section{For small $n$ when $a=2$ and $d$ is even}

We have proved that if even $d \geq 254$, then
\[
q_{d}^{(2)} (n) \geq Q_{d}^{(2)} (n)
\]
holds for $n \geq  d+7$. From the tables of values of $q_{d}^{(2)} (n)$  and $Q_{d}^{(2)} (n)$ below, it is clear that this inequlity holds for all $n$ for $d \geq 254$. In Table~\ref{Qd2table}, $\delta_{parity}=1$ if $n$ has the parity and $0$ otherwise. 

\begin{center}
\begin{table}[h!] \label{qd2table}
\begin{tabular}{lll}
\toprule % include \newpackage{booktabs}
$n$ & $q_{d}^{(2)} (n)$ & partitions \\  \midrule
$1$ &0 & $\emptyset$ \\
$2\sim d+3$ & 1 & $(n)$ \\
$d+4\sim d+8$ & $\lfloor \frac{n-d}{2} \rfloor$ & $(n-y, y)$ for $0 \leq y \leq  \frac{(n-d)}{2},\ y\neq 1$\\
\bottomrule
\end{tabular}
\caption{Values of  $q_{d}^{(2)} (n)$}
\end{table}
\end{center}

\begin{center}
\begin{table}[h!] 
\begin{tabular}{lll}
\toprule % include \newpackage{booktabs}
$n$ & $Q_{d}^{(2)} (n)$ & partitions \\ \midrule
$1\sim d$ & $\delta_{even}$ & $\emptyset$ /  $(2,\dots,2)$\\
$d+1$ & 1 & $(d+1)$  \\
$d+2$ & 1 & $(2,2,\dots,2)$ \\
$d+3$ & 1 & $(d+1,2)$ \\
$d+4$ & 1 & $(2,2,\dots,2)$ \\
$d+5$ & 2 & $(d+1,2,2)$, $(d+5)$ \\
$d+6$ & 1 & $(2,2,\ldots,2)$ \\
%\sim d+6$ &$1+\delta_{odd}$& $(d+1,2,\dots,2)$, $(d+5, 2,\dots,2)$ / $(2,\dots,2)$ \\ 
%$2d+2$ & 2 & $(2,\dots,2)$, $(d+1,d+1)$ \\
%$2d+3$ & 2 & $(d+1,2,\dots,2)$, $(d+5,2,\dots,2)$ \\
%$2d+4$ & 3 & $(2,\dots,2)$, $(d+1,d+1,2)$,$(2d+4)$ \\
%$2d+5$ & 2 & $(d+1, 2,\dots,2)$, $(d+5, 2,\dots,2)$\\
%$2d+6$ & 4 & $(2,\dots,2)$,  $(d+1,d+1,2,2)$, $(d+1,d+5)$, $(2d+4,2)$\\
%$2d+7$ & 2 & $(d+1,2,\dots,2)$, $(d+5, 2,\dots,2)$\\
\bottomrule
\end{tabular}
\caption{Values of  $Q_{d}^{(2)} (n)$ for even $d$}\label{Qd2table}
\end{table}
\end{center}

\section{When $a=2$ and $d$ is odd}

Note that for odd $d$ and $n$, Theorem~\ref{main} follows immediately, as $Q_{d}^{(2)} (n) = 0$. So, in this section, we focus on the case when $d=2d'-1$ and $n=2n'\ge 2(d'+4) = d+9$. 

We first assume $d> 253$ with $d\neq 2^r-3$. Then applying Lemma \ref{lem:change_a}, Proposition \ref{ourshift} and Lemma  \ref{lem:base_inj} as before, we find that 
\[
q_{d}^{(2)} (n) \geq q_{d'}^{(1)} (n') \geq Q_{d'-2}^{(1)} (n') = Q_{d-1}^{(2)} (n)\geq Q_{d}^{(2)} (n). 
\] 

We next consider the case when $d = 2^{r} - 3$ with $r \geq 7$, which implies $d'=2^{r-1}-1$. By applying Lemma \ref{lem:change_a}, \eqref{andrews1}, Proposition \ref{shift2} and Lemma  \ref{lem:base_inj} as before, we find that 
\[
q_{d}^{(2)} (n) \geq q_{d'}^{(1)} (n') \geq Q_{d'-2}^{(1)} (n') = Q_{d-1}^{(2)} (n)\geq Q_{d}^{(2)} (n). 
\] 
Thus, Theorem \ref{main} holds for any odd $d\ge 253$ and $n \geq d+9$. By examining the values of Tables \ref{qd2table} and \ref{qd2table2}, we can conclude that $q_{d}^{(2)} (n) \geq Q_{d}^{(2)}(n)$ holds for all positive integer $n$ except for $n = d+1$, $d+3$, or $d+5$, as desired.

\begin{center}
\begin{table}[h!] 
\begin{tabular}{lll}
\toprule % include \newpackage{booktabs}
$n$ & $Q_{d}^{(2)} (n)$ & partitions \\
\hline
$1 \sim d$ &  $\delta_{even}$  & $\emptyset$ / $(2,2, \ldots, 2)$ \\ 
$d+1$ & 2 & $(2,\ldots,2)$, $(d+1)$  \\
$d+3$ & 2 & $(2, \ldots,2)$, $(d+1,2)$ \\
$d+5 \sim d+8$ & $3\delta_{even}$ & $\emptyset$ / $(2,\ldots,2)$, $(d+3, 2,\ldots,2)$, $(d+5,2,\ldots,2)$ \\ 
%$2d+2$ & 4 & $(2,\ldots,2)$, $(d+1, 2,\ldots,2)$,  $(d+5, 2,\ldots,2)$, $(d+1,d+1)$ \\
%$2d+4$ & 5 & $(2,\ldots,2)$, $(d+1,d+1,2)$, $(d+1,2,\ldots,2)$,  \\
%& & $(d+5,2,\ldots,2)$, $(2d+4)$ \\
%$2d+6$ & 6 & $(2,\ldots, 2)$, $(d+1,d+1,2,2)$, $(d+1, 2,\ldots,2)$, \\
%& & $(d+5,d+1)$, $(d+5,2,\ldots,2)$,  $(2d+4, 2)$ \\
\bottomrule
\end{tabular}
\caption{Values of  $Q_{d}^{(2)} (n)$ for odd $d$}\label{qd2table2}
\end{table}
\end{center}

\section{When $a\ge 3$}

Since the flow of the proof of Theorem~\ref{main2} is similar with that of level 2 case, we only provide an outline of the proof. We again begin by improving the shift Alder-type inequality \cite[Theorem 2]{Arms} by removing the elimination condition, which is also a strengthened version of Proposition~\ref{ourshift}.

\begin{prop}\label{ourshift3}
For $\a\geq 3$, $d \geq  {\rm max} \{ 4\a, 2^{12} - 1 \}$ and $n \geq d+4$,
\[
	q_{d}^{(1)} (n) \geq Q_{d-\a}^{(1)} (n).
\]
\end{prop}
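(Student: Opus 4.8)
\textbf{Proof proposal for Proposition~\ref{ourshift3}.}

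The plan is to mirror the structure of the proof of Proposition~\ref{ourshift}, replacing the shift $d \to d-2$ by the general shift $d \to d-\a$. First I would obtain a suitable lower bound for $q_d^{(1)}(n)$ valid for large $n$. As in Proposition~\ref{ourshift}, set $r = \lfloor \log_2(d+1) \rfloor$ and invoke the Yee-type estimate from \cite{Yee} to bound $q_d^{(1)}(n)$ below by a function $g_d(n)$ whose generating function is a product over residue classes modulo $2d$; the hypothesis $d \geq 2^{12}-1$ guarantees $r \geq 12$, which is what allows enough room in the target set $T_r$. The combinatorial core is then to exhibit an injection $\psi$ from the partitions counted by $Q_{d-\a}^{(1)}(n)$ into those counted by $g_d(n)$, for $n$ at least as large as the Yee threshold (roughly $4d + 2^r$).

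To build $\psi$ I would use the reinterpretation of $Q_{d-\a}^{(1)}(n)$ supplied by Lemma~\ref{lem:key}: a partition counted by $Q_{d-\a}^{(1)}(n)$ is a pair $(\pi,\mu)$ where $\mu$ draws its parts from the shifted set $S$ (with $d+3-\a$ replaced by $2(d+3-\a)$, after the $d-\a$ substitution this becomes the appropriate doubled part) and $\pi$ is either empty or the single part $d$. When $\pi = \emptyset$ I take $\psi = \varphi$, the injection from Lemma~\ref{lem:base_inj} applied to $S$ and the enlarged target set $T_r$; this requires verifying $x_i \geq y_{r,i}$ for the ordered elements, which I would confirm by tabulating the two sequences exactly as in Tables~\ref{STr_table} and \ref{Tr_table}, checking the extremal case $r=12$ and noting the coefficient of $d$ is largest there. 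When $\pi = (d)$, I would distribute the extra weight $d$ together with the smallest part of $\mu$ across several small available parts (the analogues of $d+4, d+8, \dots$), using a triangular-number bookkeeping of the form $m = \tfrac{\ell(\ell+1)}{2} + j$ to decide how many copies of each small part to emit. The validity condition becomes an inequality of the shape $\mu_m + d \geq (d+\text{const})\ell + (\text{const})j$, which I expect to follow from $d \geq 2^{12}-1$ by the same quadratic-versus-linear estimate $\tfrac{\ell(\ell+1)}{4}d + d - 1 \geq \tfrac{5}{4}\ell d \geq \ell(d+\text{const})$ used in Proposition~\ref{ourshift}.

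The main obstacle is that for general $\a \geq 3$ the small parts available in the target set are no longer simply $\{1, d+4, d+8, \ldots\}$ as in the $\a=2$ case; the spacing and the residues depend on $\a$, so the explicit tables and the triangular-number allocation must be reworked to guarantee both that $\psi$ lands in a legitimate partition counted by $g_d(n)$ and that it remains injective. In particular, one must ensure there are always enough copies of the smallest part (the analogue of ``at least $4$ copies of $1$'') to absorb the redistribution, which is where the hypothesis $d \geq 4\a$ enters, controlling the ratio between $d$ and the shift so that the smallest residue class is genuinely small relative to $d$. I would isolate this as the key lemma and verify injectivity by showing the image records enough data (the multiset of emitted small parts, encoding $\ell$ and $j$, together with $\varphi(\overline{\mu_m})$) to recover $(\pi,\mu)$ uniquely.

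Finally, having established $q_d^{(1)}(n) \geq Q_{d-\a}^{(1)}(n)$ for $n \geq 4d + 2^r$, I would close the gap $d+4 \leq n < 4d + 2^r$ by direct comparison of the two partition functions, tabulating $q_d^{(1)}(n)$ (which grows at least linearly in $n/d$) against $Q_{d-\a}^{(1)}(n)$ (which stays bounded by a small constant on each $O(d)$-length window), exactly as in the end of the proof of Proposition~\ref{ourshift}; the lower bound on $d$ makes $q_d^{(1)}(n)$ dominate on each window.
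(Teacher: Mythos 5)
Your skeleton is right --- Yee's bound $q_d^{(1)}(n) \ge g_d(n)$ for $n \ge 4d+2^r$, an injection built from Lemma~\ref{lem:base_inj} plus a triangular-number redistribution of the excess weight, and a finite check on the window $d+4 \le n < 4d+2^r$ --- and this is indeed how the paper proceeds. But you have missed the one genuinely new difficulty of the $\alpha \ge 3$ case, and without it your injection cannot even be set up. The parts counted by $Q_{d-\alpha}^{(1)}$ are $1,\ d-\alpha+2,\ d-\alpha+4,\ 2d-2\alpha+5,\ 2d-2\alpha+7,\dots$, and for $\alpha \ge 3$ \emph{both} $d-\alpha+2$ and $d-\alpha+4$ fall below the second element of any admissible target set (since $d-\alpha+4 \le d+1 < d+2$), whereas for $\alpha=2$ only the single part $d$ is problematic. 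So the doubling trick $\frac{1}{1-q^c}=\frac{1+q^c}{1-q^{2c}}$ must be applied to \emph{two} parts, which makes $\pi$ range over four possibilities: $\emptyset$, $(d-\alpha+2)$, $(d-\alpha+4)$, and $(d-\alpha+2,\,d-\alpha+4)$ --- not the two possibilities ``empty or a single part'' that you describe. If you remove and double only one of them, the resulting source set $S$ has second element $d-\alpha+4 < d+2 \le y_{r,2}$, the hypothesis $x_i \ge y_{r,i}$ of Lemma~\ref{lem:base_inj} fails, and the base injection $\varphi$ does not exist.

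This forces the rest of the construction to change in ways your proposal does not anticipate. The paper deletes \emph{four} elements $d+2, d+4, d+8, d+16$ from $T_r$ (hence needs $r \ge 12$, i.e.\ $d \ge 2^{12}-1$, so that $T_{12}$ still contains $d+32,\dots,d+1024$ before $3d+2$), uses the pair $(d+2, d+8)$ to absorb $\pi=(d-\alpha+2)$ and the disjoint pair $(d+4, d+16)$ to absorb $\pi=(d-\alpha+4)$ --- the disjointness is what keeps $\psi$ injective across the two single-part cases --- and handles $\pi=(d-\alpha+2, d-\alpha+4)$ by emitting one extra part $d+2^{r-1}$ together with a parity-of-$m$ choice between the two pairs. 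Also, contrary to your guess, the hypothesis $d \ge 4\alpha$ is not about having enough copies of the part $1$; it is what makes the comparisons $x_i \ge y_{12,i}$ (e.g.\ $(5k+3)(d-\alpha+3)-1 \ge (2k+1)d+512$) and the weight estimate $2d-2\alpha+6+\mu_m \ge d+2^{r-1}+j\lambda_2+(\ell-j)\lambda_1$ go through, since it bounds $\alpha \le d/4$. You correctly flag that ``the tables must be reworked,'' but you locate the obstruction in the wrong place: the small parts of the target set do not depend on $\alpha$ at all. The missing idea is the four-way case split on $\pi$ and the corresponding enlargement of the deleted and emitted part sets.
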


\begin{proof}
 The proof for the $d=2^r -1$ case is exactly the same as the proof for the $d\neq 2^r -1$ case, except for using $L_d(n)$ instead of $g_{d}(n)$, so we only present the proof for the $d \neq 2^r -1$ case here.

Recall from \eqref{yee1} that $q_d^{(1)}\ge 	g_{d} (n)$ for $n\geq 4d+2^r$. As in the proof of Proposition~\ref{ourshift}, let $r = \lfloor \log_{2} d+1 \rfloor$ and we first show that for $n \geq 4d+2^r$,
\[
	g_{d} (n) \geq Q_{d-\a}^{(1)} (n)
\] 
by constructing an injection $\psi$ from the set of partitions counted by $Q_{d-\a}^{(1)} (n)$ to that counted by $g_{d} (n)$.
Set
\[
\begin{aligned}
	S &= \{x \mid x \equiv 1, d-\a+2\ \pmod{d-\a+3}\} \\
		&\qquad\cup \{2d-2\a+4, 2d-2\a+8\} \setminus \{d-\a+2, d-\a+4\}, \\
	T_r &= \{ y \mid y \equiv 1, d+2, d+4, \dots, d+2^{r-2} \pmod{2d}\} \setminus \{d+2, d+4, d+8, d+16 \}.
\end{aligned}
\]
It can be observed that $\rho(S;n)$  counts the partitions generated by
\[
	\frac{1}{(1-q) (1-q^{2d-2\a+4})(1-q^{2d-2\a+8}) (q^{2d-2\a+5},q^{2d-2\a+7} ; q^{d-\a+3} )_\i},
\]
while 
\[
\sum_{n=0}^\i Q_{d-\a}^{(1)} (n)q^n=\frac{(1+q^{d-\a+2})(1+q^{d-\a+4})}{(1-q) (1-q^{2d-2\a+4})(1-q^{2d-2\a+8}) (q^{2d-2\a+5},q^{2d-2\a+7} ; q^{d-\a+3} )_\i}.
\]

As before, we interpret $Q_{d-\a}^{(1)}(n)$ as the number of pairs of partitions $(\pi, \mu)$, where $\mu$ is a partition into parts from $S$ and $\pi$ is a partition $\emptyset,  (d-\a+2), (d-\a+4), $ or $( d-\a+2, d-\a+4 )$ satisfying $|\pi| + |\mu| = n$.
We arrange $S$ and $T_r$ in increasing order and let $x_i$ and $y_{r,i}$ be the $i^{th}$ elements of $S$ and $T_r$, respectively. 
\begin{align*}
	S    &=\{1, 2d-2\a+4, 2d-2\a+5,  2d-2\a+7, 2d-2\a+8,   3d-3\a+8,   \\
	&\qquad 3d-3\a+10, 4d-4\a+11, 4d-4\a+13, \ldots \}, \\
	T_{12} &=\{1, d+32, d+64, d+128,d+256, d+512, d+1024, 3d+2,3d+4, \ldots     \}.
\end{align*}

\begin{center}
\begin{table}[h!]
\begin{tabular}{lll}
\toprule % include \newpackage{booktabs}
$i$	    & $x_i$	& $y_{12,i}$	\\ \midrule
1	    & 1		& 1 	\\
2       & $2d-2\a+4$ & $d+32$ \\
3	    & $2d-2\a+5$  & $d+64$\\
4	    & $2d-2\a+7$  & $d+128$\\
5	    & $2d-2\a+8$  & $d+256$\\
$10k+6 \; (k \ge 0)$  & $(5k+3)(d-\a+3) -1$ & $(2k+1)d+512$  \\
$10k+7 \; (k \ge 0)$  & $(5k+3)(d-\a+3) +1$ & $(2k+1)d+1024$  \\
$10k+8\: (k\ge 0)$    & $(5k+4)(d-\a+3) -1$     & $(2k+3)d+2$  \\
$10k+9\: (k\ge 0)$  & $(5k+4)(d-\a+3) +1$     & $(2k+3)d+4$  \\
$10k \; (k \ge 1)$  & $(5k)(d-\a+3) -1$ & $(2k+1)d+8$  \\
$10k+1 \; (k \ge 1)$  & $(5k)(d-\a+3) +1$ & $(2k+1)d+16$ \\
$10k+2 \; (k \ge 1)$  & $(5k+1)(d-\a+3) -1$ & $(2k+1)d+32$ \\
$10k+3 \; (k \ge 1)$  & $(5k+1)(d-\a+3) +1$ & $(2k+1)d+64$  \\
$10k+4 \; (k \ge 1)$  & $(5k+2)(d-\a+3) -1$ & $(2k+1)d+128$  \\
$10k+5 \; (k \ge 1)$  & $(5k+2)(d-\a+3) +1$ & $(2k+1)d+256$  \\
\bottomrule
\end{tabular}
\caption{Elements of $S$ and $T_{12}$} \label{ST12_table}
\end{table}
\end{center}

Since $d \geq  {\rm max} \{ 4\a, 2^{12} - 1 \}$, we find from Table \ref{ST12_table} that $x_i\ge y_{12,i}$ for all $i$. When $r\ge 13$, as in the proof of Proposition~\ref{ourshift}, we deduce that for each $i$, $x_i\ge y_{r,i}$ for all $i$ and $r\ge 13$. 
Then by Lemma \ref{lem:base_inj}, there is an injection $\varphi$ from the set of partitions counted by $\rho(S;n)$ to the set of partitions of $n$ counted by $\rho(T_r;n)$. 
Now we construct $\psi$ using $\varphi$ as follows:
\begin{enumerate}
\item[] (Case 1). If $\pi=\emptyset$, then we just take the map so that $\psi=\varphi$. 
\item[] (Case 2). If $\pi = (d-\a+2)$ or $(d-\a+4)$, we divide into two cases. For convenience, we let $\l_1 = d+2$ (resp. $d+4$) and $\l_2 = d+8$ (resp. $d+16$) when $\pi = (d-\a+2)$ (resp. $(d-\a+4)$). 
\begin{enumerate}
\item Suppose $\mu$ has only a part of size $1$. Then  we define $\psi$ by
\[
\psi(\pi,\mu)=\psi(\pi,1,1,\ldots, 1)=(\lambda_1, 1, \ldots, 1).
\]
This is well-defined, because we have enough copies of the part $1$ in $\mu$, as $n \geq 4d+2^r$.

\item Suppose $\mu$ has a part other than $1$.  We  let $\mu_m$ be the smallest part $>1$ in $\mu$.  Then $\mu_m=2d-2\a+4$, $2d-2\a+5$, $2d-2\a+7$, $2d-2\a+8$, $(k+3)(d-\a+3) -1$, or $(k+3)(d-\a+3)+1$  for a non-negative integer $k$. We note that $\mu_m = (k+3)(d-\a+3)- 1$ is the $(2k+6)^{th}$ element and $\mu_m = (k+3)(d-\a+3)+1$ is the $(2k+7)^{th}$ element in $S$. 
For a positive integer $m$, let  $\ell$ be the largest integer satisfying $\displaystyle{T_\ell:=\frac{\ell (\ell +1)}{2} \leq m}$ and $\displaystyle{m = T_\ell +j}$ with $0\leq j\le\ell$.
We define $\psi$ when $\mu_m$ is the $m^{th}$ element in $S$ by
$$\psi(\pi,\mu)=\psi(\pi,\mu_m,\overline{\mu_m})=(\l_2,\dots,\l_2,\l_1,\dots,\l_1,1,\dots1,\varphi(\overline{\mu_m})),$$
where the part $\l_2$ appears $j$ times and  the part $\l_1$ appears $\ell-j$ times. The validity of this definition of $\psi$ can be established in exactly the same way as the case 2 in the proof of Proposition~\ref{ourshift}. 
\end{enumerate}

\item[] (Case 3). Finally, suppose $\pi = (d-\a+2, d-\a+4)$.
\begin{enumerate}
\item If $\mu$ has only a part of size $1$, then  we define $\psi$ by
\[
	\psi(\pi,1,1,\ldots, 1)=(d+2^{r-1}, 1, \ldots, 1).
\]
\item If $\mu$ has a part other than $1$. Let $\mu_m$ be as in Case 2 (b). We denote $m' = T_\ell +  j$ with $m=2m'$ or $2m'+1$ and $m' \geq 1$.  We also let $\lambda_1 = d+2$ (resp. $d+4$) and $\l_2 = d+8$ (resp. $d+16$) when $m$ is even (resp. odd).  
Then, we define $\psi$ by
\[
\psi(\pi,\mu_m,\overline{\mu_m})=
 (d+2^{r-1}, \l_2,\ldots, \l_2, \l_1, \ldots, \l_1, 1,\ldots, 1, \varphi(\overline{u_m}) ),
 \]
 where $\lambda_1$ appears $\ell-j$ times and $\lambda_2$ appears $j$ times. That is, for $1\le \ell\le 2$, $(d-\a+2, d-\a+4,\mu_m,\overline{\mu_m})$ are mapped to 
 \[
	\begin{cases}
		\psi(\pi,\mu_2,\overline{\mu_2})&=(d+2^{r-1},d+2,1,\cdots,1, \varphi(\overline{\mu_2})),\\
		\psi(\pi,\mu_3,\overline{\mu_3})&=(d+2^{r-1},d+4, 1, \ldots, 1,\varphi(\overline{\mu_3}) ),\\
		\psi(\pi,\mu_4,\overline{\mu_4})&=(d+2^{r-1},d+8, 1, \ldots, 1,\varphi(\overline{\mu_4}) ), \\
		\psi(\pi,\mu_5,\overline{\mu_5})&=(d+2^{r-1},d+16,  1, \ldots, 1,\varphi(\overline{\mu_5}) ),\\
		\psi(\pi,\mu_6,\overline{\mu_6})&=(d+2^{r-1},d+2, d+2, 1, \ldots, 1,\varphi(\overline{\mu_6}) ),\\
		\psi(\pi,\mu_7,\overline{\mu_7})&=(d+2^{r-1},d+4, d+4, 1, \ldots, 1,\varphi(\overline{\mu_7}) ),\\
		\psi(\pi,\mu_8,\overline{\mu_8})&=(d+2^{r-1},d+2, d+8, 1, \ldots, 1,\varphi(\overline{\mu_8}) ),\\
		\psi(\pi,\mu_9,\overline{\mu_9})&=(d+2^{r-1},d+4, d+16, 1, \ldots, 1,\varphi(\overline{\mu_9}) ),\\
		\psi(\pi,\mu_{10},\overline{\mu_{10}})&=(d+2^{r-1},d+8, d+8, 1, \ldots, 1,\varphi(\overline{\mu_{10}}) ),\\
		\psi(\pi,\mu_{11},\overline{\mu_{11}})&=(d+2^{r-1},d+16, d+16, 1, \ldots, 1,\varphi(\overline{\mu_{11}}) ),\\
	\end{cases}
\]
which are well-defined. Now let $\ell\ge 3$.
From the observation made in Case 2(b), we find that 
\begin{align*}
	2d-2\a+6+ \mu_{m} &= 2d-2\a+6+m'(d-\a+3) \pm 1 \\
		&\ge 2d+ T_\ell(d-\a+3)-2\a+5\\
		&\ge 2d+ T_\ell(d+3)-(2+T_\ell)\frac{d}{4}= \frac{3d}{2}+T_\ell(\frac{3d}{4}+3)\\
		&\ge d+2^{r-1}+\ell\lambda_2\ge d +2^{r-1}+ j \l_2 + (\ell - j)\l_1.
\end{align*}
Thus $\psi$ is well-defined for all positive values of $\ell.$
\end{enumerate}
\end{enumerate}
 
In summary, we have established that for $n \geq 4d+2^r$ and $d  \ge {\rm max} \{ 4\alpha, 2^{12}-1 \}$,
\[
	q_{d}^{(1)} (n) \geq Q_{d-\alpha}^{(1)} (n).
\]
To complete the proof, the remaining values of both sides up to $n = 4d + 2^r - 1$ are verified by direct computation. In Table~\ref{Qd-a1table}, we list some note-worthy values of $q_{d}^{(1)}(n)$ and $Q_{d-\alpha}^{(1)}(n)$. We also note that $7d-7\alpha+13 \geq 5d  \geq 4d+2^{r} - 1$. 

\begin{center}
\begin{table}[!t] 
\begin{tabular}{ccc}
\toprule % include \newpackage{booktabs}
$n$ & $q_{d}^{(1)}(n)$ & $Q_{d-\alpha}^{(1)} (n)$ \\
\midrule
$1\sim d-\alpha+1$ & 1 & 1 \\
$d-\alpha+2 \sim d-\alpha+3$ & 1 & 2 \\
$d-\alpha+4 \sim d+3$ & at most 2 & 3  \\
$d+4 \sim 2d-2\a+3$ & $\lfloor \frac{n-d+2}{2} \rfloor$ & 3 \\
$\vdots$ & at least $d/2- \a+2 > 2^8$ & at most 7 \\
$2d-2\a+8\sim 3d-3\a+5$ & $\lfloor \frac{n-d+2}{2} \rfloor$  & 8 \\
%$\vdots$ & at least ${\rm max}\{2^{12}-1, 4\a\} - 3\a/2 +4$ & at most 17 \\
$\vdots$ & \vdots & at most 17 \\
$3d-3\alpha+12 \sim 4d-4\alpha+7$ & at least $\lfloor \frac{n-d+2}{2} \rfloor$  &18 \\
$\vdots$ & $\vdots$ & $\vdots$ \\
$4d-4\a+16\sim 5d-5\a+9$ & at least $\lfloor \frac{n-d+2}{2} \rfloor$ & 38 \\
$\vdots$ & $\vdots$ & $\vdots$ \\
$5d-5\a+20 \sim 6d-6\a+11$ & at least $\lfloor \frac{n-d+2}{2} \rfloor$  & 74 \\
$\vdots$ & $\vdots$ & $\vdots$ \\
$6d-6\a+24 \sim 7d-7\a+13$ & at least $\lfloor \frac{n-d+2}{2} \rfloor$  & 139 \\
\bottomrule
\end{tabular}
\caption{Values of $q_{d}^{(1)}(n)$ and $Q_{d-\alpha}^{(1)} (n)$}\label{Qd-a1table}
\end{table}
\end{center}

\end{proof}

Now we are ready to give the proof of Theorem~\ref{main2}. 

\begin{proof}[Proof of Theorem~\ref{main2}]
Let $d'=\lceil\frac{d}{a}\rceil \geq 2^{12}-1$. Then applying Lemma \ref{lem:change_a} and Proposition \ref{ourshift3} with $\alpha=4$, we find that for $n \geq d+4a$.
\[
	q_{d}^{(a)} (n) \geq q_{\lceil \frac{d}{a} \rceil}^{(1)} \left( \left\lceil \frac{n}{a} \right\rceil \right) \geq Q_{\lceil \frac{d}{a} \rceil-4}^{(1)} \left( \left\lceil \frac{n}{a} \right\rceil \right)=Q_{ad'-a-3}^{(a)} \left( a \left\lceil \frac{n}{a} \right\rceil \right),
\]
where the identity is deduced from the obvious bijection resulting from multiplying or dividing each part by $a$. 
We require that $n \geq d+4a$ as we need $\lceil \frac{n}{a} \rceil \geq d'+4$. We remark that when $d \equiv 0 \pmod{3}$ with $a=3$, we employ Proposition~\ref{ourshift} instead of Proposition~\ref{ourshift3}.

By employing the same method to prove \eqref{holly1} or \cite[Eq.(31)]{Arms}, we can prove that
\[
Q_{ad'-a-3}^{(a)} \left( a \left\lceil \frac{n}{a} \right\rceil \right)\ge Q_{d}^{(a)} (n).
\]

By examining the values of $q_{d}^{(a)}(n)$ and $Q_{d}^{(a)}(n)$ for small values of $n$ as in Table~\ref{lvatable}, we complete the proof.  The case when $a=3$ is given in the tables~\ref{lv3table}.  In these tables, $\delta_{condition}$ is $1$ if the {\it condition} holds, otherwise it is $0$.
\end{proof}

\begin{center}
\begin{table}[t!] 
\begin{tabular}{ccccc}
\toprule % include \newpackage{booktabs}
$n$ & $q_{d}^{(3)}(n)$ & $Q_{3d'}^{(3)} (n)$ & $Q_{3d'-2}^{(3)}(n)$ & $Q_{3d'-1}^{(3)} (n)$ \\	
\midrule
$1\sim d-1$ & $\delta_{n\geq3}$ & $\delta_{3 | n}$ & $\delta_{3 | n}$  & $\delta_{3 | n}$  \\
$d$		&1 	& 2		& 1		& 1 \\
$d+1$	&1	& 0		& 0	 	& 1 \\
$d+2$	&1	& 0		& 1		& 0 \\
$d+3$	&1	& 2		& 1		& 1 \\
$d+4$	&1	& 0		& 0		& 1 \\
$d+5$	&1	& 0		& 1		& 0 \\
$d+6$	&2	& 3		& 2		& 2 \\
$d+7$	&2	& 0		& 0		& 1 \\
%$d+8$	&3	& 0		& 1		& 0 \\
%$d+9$	&3	& 3		& 2		& 2 \\
$d+8 \sim d+11$ & $\lfloor \frac{n - d -2}{2} \rfloor$ & at most $3$ & at most $2$ & at most $2$ \\
%$2d-1 \sim  3d+8$ & $\lfloor \frac{n - d -2}{2} \rfloor$ & at most $12$ & at most $5$ & at most $5$ \%\
%$3d+9 \sim  3d+11$ & at least $\lfloor \frac{n - d -2}{2} \rfloor$ & at most $14$ & at most $7$ & at most $7$ \\
\bottomrule
\end{tabular}
\caption{Values of $q_{d}^{(3)}(n)$ and $Q_{d}^{(3)} (n)$}\label{lv3table}
\end{table}
\end{center}

\begin{center}
\begin{table}[t!] 
\begin{tabular}{ccccc}
\toprule % include \newpackage{booktabs}
%a, d+3 -a, d+3+a
$n$ & $q_{d}^{(a)}(n)$ & $Q_{d \equiv -3 \pmod{a}}^{(a)} (n)$ & $Q_{d \not\equiv -3 \pmod{a}}^{(a)}(n)$  \\	
\midrule
$1 \sim d-a+2$ & $\delta_{n \geq a}$ & $\delta_{a | n}$ & $\delta_{a | n}$    \\
$d-a+3$		&1 	& 2		& 1		 \\
$d-a+4 \sim d+2$ & 1 & $\delta_{a |n}$ &  $\delta_{a |n}$ \\
$d+3$ & 1 & 2  &  0 \\
$d+4 \sim d+a+2$ & 1 & $\delta_{a |n}$ &  $\delta_{a |n}$ \\
$d+a+3$ & 1 & 3 & 2 \\
$d+a+4 \sim d+2a-1$ & 1 & $\delta_{a |n}$ & $\delta_{a |n}$ \\
$d+2a$ & 2  & 0    & $\delta_{a | n}$ \\
$d+2a+1$ & 2 & 0 & $\delta_{a | n}$ \\
$d+2a+2 \sim d+4a-1$ & $\lfloor \frac{n - d -2a+4}{2} \rfloor$ & at most 3 & at most 2 \\
\bottomrule
\end{tabular}
\caption{Values of $q_{d}^{(a)}(n)$ and $Q_{d}^{(a)} (n)$}\label{lvatable}
\end{table}
\end{center}

\section{Concluding Remarks}

Based on numerical experiments, we propose the conjecture that the lower bound of $d$ for the validity of Theorems~\ref{main} and~\ref{main2}:
\begin{itemize}
\item[(a)] Theorem~\ref{main} holds for all even $d$ and all odd $d \geq 9$. 
\item[(b)] When $a=3$, Theorem~\ref{main2} holds for all $d \geq 4$ except for $d=6$ or $9$.
\item[(c)] For $a \geq 4$, Theorem~\ref{main2} holds for all $d \geq 4a-2$.
\end{itemize}

\section*{Acknowledgments}
The second author was supported by the National Research Foundation of Korea (NRF) funded by the Ministry of Science and ICT (NRF-2022R1A2C1007188) and the third author was supported by National Research Foundation of Korea (NRF) funded by the Ministry of Science and ICT (NRF-2022R1F1A1063530).

%%%%%%%%%%%%%%%%%%%%%%%%%%%%%%%%%%%%%%
%%% References 
%%%%%%%%%%%%%%%%%%%%%%%%%%%%%%%%%%%%%%

\end{document}